
\documentclass[12pt]{amsart}
\usepackage{amssymb}
\usepackage{enumerate,titletoc,mathrsfs}
\usepackage[pagebackref,colorlinks,linkcolor=red,citecolor=blue,urlcolor=blue,hypertexnames=true]{hyperref}



\renewcommand{\setminus}{\smallsetminus}
\renewcommand{\subset}{\subseteq}

\textwidth = 6.2 in
\textheight = 8.5 in
\oddsidemargin = 0.0 in
\evensidemargin = 0.0 in
\topmargin = 0.0 in
\headheight = 0.0 in
\headsep = 0.3 in
\parskip = 0.05 in
\parindent = 0.3 in

\linespread{1.15}

\pagenumbering{arabic}


\def\CC{\mathbb C}
\def\DD{\mathbb D}
\def\TT{\mathbb T}
\def\fA{\mathfrak{A}}
\def\dtn{\frac{dt}{2\pi}}
\def\BB{\mathbb{B}^2}
\def\PP{\BB}
\def\ww{\rho}
\def\tww{\tilde{\ww}}

\def\intpp{\int_{0}^{2\pi}}

\def\Cp{C_{\rho}}
\def\uu{\gamma}

\def\dd{\sigma}


\newtheorem{theorem}{Theorem}[section]

\newtheorem{lemma}[theorem]{Lemma}

\theoremstyle{definition}

\theoremstyle{remark}

\newtheorem{remark}[theorem]{Remark}

\numberwithin{equation}{section}

         {\begin{exa}}
             {{\hfill $\Box ~~$}\end{example}}

\newcounter{Inc}

\usepackage[dvipsnames]{xcolor}

\newcommand{\df}[1]{{\it{#1}}{\index{#1}}}


\title[The Neil Algebra]{Szeg\"o and Widom Theorems for the Neil Algebra}

\author[S. Balasubramanian]{Sriram Balasubramanian${}^1$}
\address{Sriram Balasubramanian, Department of Mathematics \\ IIT Madras \\ Chennai - 600036, India}
\email{bsriram@iitm.ac.in}
\thanks{${}^1$Supported by the New Faculty Initiative Grant (MAT/15-16/836/NFIG/SRIM) of IIT Madras. ${}^2$Research supported by the NSF grant DMS-1361501}

\author[S. McCullough]{Scott McCullough${}^2$}
\address{Scott McCullough, Department of Mathematics\\
 University of Florida\\ Gainesville 
  }
  \email{sam@ufl.edu}

\author[U. Wijesooriya]{Udeni Wijesooriya}
\address{Udeni Wijesooriya, Department of Mathematics\\
 University of Florida\\ Gainesville 
  }
  \email{wudeni.pera06@ufl.edu}

\dedicatory{In appreciation for his profound influence on operator theory and our 
mathematical lives, we dedicate this article to Joe Ball.}

\begin{document}






\begin{abstract}
Versions of well known function theoretic operator theory results of Szeg\"o and Widom are established for the Neil algebra. The Neil algebra is the subalgebra
of the algebra of bounded analytic functions on the unit disc consisting of those functions whose derivative vanishes at the origin.
\end{abstract}

\subjclass{47B335, 30H10 (Primary) 30H05, 46E20 (secondary)}
\keywords{Toeplitz operators, Szeg\"o's Theorem, constrained algebra, Neil algebra, distinguished variety}

\maketitle

\section{Introduction}
\index{$\CC$} \index{$\DD$} \index{$\TT$} \index{$H^2$} \index{$H^\infty$} \index{$L^p$} \index{$\mathscr{P}$}
Let $\CC$ denote the complex numbers,  $\DD=\{|z|<1\}\subset \CC$ denote the unit disk with its boundary $\TT =\{|z|=1\}$.
Denote by  $H^2=H^2(\DD)$ and $H^\infty=H^\infty(\DD)$  the standard Hardy
spaces of functions analytic in $\mathbb D$ with square summable power series coefficients and bounded analytic functions on $\DD$ respectively.
Let $L^p$ denote the $L^p$ spaces for the $\TT$ (identified with the corresponding $L^p$ spaces for $[0,2 \pi]$ with respect to the measure $\dtn$). Let $\mathscr{P}$ denote the set of analytic polynomials that vanish at $0$.
Thus a $p\in \mathscr{P}$ has the form,
\[
  p(z) =\sum_{j=1}^n p_j z^j
\]
for some positive integer $n$ and $p_1,\dots,p_n\in \CC$.  Given a non-negative function $\ww$ on $\TT$ with $\log(\ww)\in L^1$
a (special case of a) well known result of Szeg\"o (see for instance \cite{Koosis} page 219) identifies the $L^2(\ww)$ distance from the constant function $1$ to $\mathscr{P}.$

\begin{theorem}[of Szeg\"o]
\label{thm:therealszego}
 \[\inf \{ \intpp |p-1|^2 \, \ww \, \dtn: p\in \mathscr{P}\} =\exp(\intpp \log(\ww)\, \dtn).\]
\end{theorem}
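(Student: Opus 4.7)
The plan is to transfer the extremal problem on the left into a distance problem in $H^2$ via an outer function that realizes $\ww$ as the squared modulus of its boundary values; the answer then falls out of the orthogonal decomposition $H^2 = \CC \oplus z H^2$. Since $\log \ww \in L^1$, the standard formula
\[
 h(z) = \exp\left( \frac{1}{4\pi} \intpp \frac{e^{it}+z}{e^{it}-z} \log \ww(t) \, dt \right)
\]
defines an outer function $h \in H^2$ whose boundary values satisfy $|h|^2 = \ww$ almost everywhere on $\TT$. Evaluating at $z=0$ gives $|h(0)|^2 = \exp(\intpp \log \ww \, \dtn)$, which is exactly the right-hand side of the claimed identity.

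Next, for any $p \in \mathscr{P}$, the boundary identity $|p-1|^2 \ww = |(p-1)h|^2$ converts the integral into an $H^2$ norm,
\[
 \intpp |p-1|^2 \ww \, \dtn = \|ph - h\|_{H^2}^2,
\]
so the Szeg\"o infimum is the squared $H^2$-distance from $h$ to the linear set $\mathscr{P} h = \{ph : p\in\mathscr{P}\}$. Factoring each $p \in \mathscr{P}$ as $p(z) = z q(z)$ and using that the shift is an isometry on $H^2$, I see that the $H^2$-closure of $\mathscr{P}h$ equals $z\,\overline{\CC[z]h}$. Outerness of $h$ forces $\CC[z]h$ to be dense in $H^2$, so the closure of $\mathscr{P}h$ is precisely $zH^2$. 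Since $H^2 \ominus zH^2 = \CC \cdot 1$ and the orthogonal projection of $h$ onto $\CC$ is $h(0)$, the distance from $h$ to $zH^2$ equals $|h(0)|$, and squaring yields the theorem.

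The main obstacle is the density statement $\overline{\CC[z]h}^{H^2} = H^2$ for outer $h$, which rests on Beurling's characterization of shift-invariant subspaces (or equivalently on the triviality of the inner factor of an outer function); it is precisely this ingredient that will need to be reworked in the Neil algebra setting later in the paper. For a self-contained proof of the lower bound $\geq$, one can avoid the outer function entirely: apply Jensen's inequality first to the $H^\infty$ function $1-p$, which satisfies $(1-p)(0)=1$, to obtain $\intpp \log|1-p|^2\,\dtn \geq 0$, and then invoke concavity of $\log$ with respect to the probability measure $\dtn$ to get $\log \intpp |p-1|^2 \ww\,\dtn \geq \intpp \log \ww\,\dtn$. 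The matching upper bound still requires producing near-minimizing polynomials, for which the outer-function construction is the cleanest device.
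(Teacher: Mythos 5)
The paper does not actually prove Theorem \ref{thm:therealszego}; it is presented as background and cited to Koosis. Your proof is a correct and entirely standard argument. Constructing the outer function $h$ from $\log\ww$ via the Herglotz integral, observing $|h|^2=\ww$ a.e.\ and $|h(0)|^2=\exp(\intpp\log\ww\,\dtn)$, converting the weighted minimization to the $H^2$-distance from $h$ to $\overline{\mathscr{P}h}$, identifying that closure with $zH^2$ via the cyclicity of outer functions, and projecting onto $H^2\ominus zH^2=\CC$ all check out; the Jensen-inequality proof of the lower bound (applied first to the nonvanishing-at-$0$ function $1-p$, then to the concave $\log$ against $\dtn$) is also correct and nicely self-contained.

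It is worth noting that your route parallels the paper's proof of the analogous Theorem \ref{thm:SzegofA} for the Neil algebra in Section 2: there too the weight (assumed continuous and strictly positive) is factored as $\ww = |\exp(\uu)|^2$ with $\exp(\pm\uu)\in H^\infty$, and multiplication by $\exp(\uu)$ is used as a unitary from $H^2(\ww)$ onto $H^2$ that preserves the constraint subspace, reducing the problem to computing the $H^2$-distance from $\exp(\uu)$ to $z^2H^2$. The only structural difference is that the complement of $z^2H^2$ is two-dimensional, so the paper extracts the answer via a Cauchy--Schwarz estimate against the reproducing kernel $k^{\dd}_0$ of a well-chosen $H^2_{\dd}$ rather than by an explicit orthogonal decomposition onto $\mathrm{span}\{1,z\}$, though the latter would work equally well. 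Your observation that the density step $\overline{\CC[z]h}=H^2$ (Beurling) is the ingredient needing replacement in the constrained setting is exactly right, and it is precisely why the family $\{H^2_\alpha\}_{\alpha\in\BB}$ enters the picture.

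Two minor remarks. First, for $h\in H^2$ one needs $\ww\in L^1$, which is implicit in the formulation (otherwise the left-hand side of the theorem is $+\infty$). Second, in the statement ``the $H^2$-closure of $\mathscr{P}h$ equals $z\,\overline{\CC[z]h}$'' it is worth saying explicitly that multiplication by $z$ is an isometry with closed range, so it commutes with taking closures; you invoke the isometry but the closed-range point is what makes $z\overline{\CC[z]h}$ already closed.
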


A theorem of Widom  characterizes those unimodular functions $\phi \in L^\infty$ whose distance to $H^\infty$ is less than one in terms of Toeplitz operators. A $\phi\in L^\infty$ induces a multiplication operator $M_\phi:L^2\to L^2$ defined by $M_\phi f =\phi f.$
Let $V:H^2\to L^2$ denote the inclusion. The operator $T_\phi = V^* M_\phi V$ is the \df{Toeplitz operator} with symbol $\phi$.
\index{$M_\phi$} \index{$T_\phi$}

\begin{theorem}[Widom's invertibility criteria {\cite[Theorem 7.30]{douglas}}]
\label{thm:realwidom}
  Suppose $\phi \in L^\infty$ is unimodular. There exists an $f\in H^\infty$ such that $\|f-\phi\|<1$ if and only if $T_\phi$ is left invertible.
\end{theorem}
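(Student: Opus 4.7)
The plan is to relate left invertibility of $T_\phi$ to the norm of the associated Hankel operator, and then to invoke Nehari's theorem to convert that norm into an $L^\infty$-distance to $H^\infty$.

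First, I would introduce the Hankel operator $H_\phi := (I - VV^*) M_\phi V : H^2 \to L^2 \ominus H^2$, where $VV^*$ is the orthogonal projection of $L^2$ onto $H^2$. For $h \in H^2$, the splitting
\[
  M_\phi V h \;=\; V T_\phi h \,+\, H_\phi h
\]
is orthogonal in $L^2$, so the Pythagorean identity yields $\|\phi h\|_{L^2}^2 = \|T_\phi h\|_{H^2}^2 + \|H_\phi h\|_{L^2}^2$. Because $\phi$ is unimodular, $M_\phi$ is unitary on $L^2$, so $\|\phi h\|_{L^2} = \|h\|_{H^2}$, and the identity reduces to
\[
  \|h\|^2 \;=\; \|T_\phi h\|^2 + \|H_\phi h\|^2 \qquad (h \in H^2).
\]

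Second, since $T_\phi$ maps between Hilbert spaces, it is left invertible if and only if it is bounded below, i.e.\ $\|T_\phi h\| \ge c\|h\|$ for some $c > 0$ and all $h$. By the displayed identity, bounded below with constant $c$ is equivalent to $\|H_\phi h\|^2 \le (1-c^2)\|h\|^2$ for all $h$, i.e.\ $\|H_\phi\|^2 \le 1 - c^2$. Hence $T_\phi$ is left invertible if and only if $\|H_\phi\| < 1$.

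Third, I would invoke Nehari's theorem, which asserts $\|H_\phi\| = \mathrm{dist}_{L^\infty}(\phi, H^\infty) = \inf\{\|\phi - f\|_\infty : f \in H^\infty\}$. Combined with the previous step, $T_\phi$ is left invertible precisely when this infimum is strictly less than $1$, which is the same as saying there exists $f \in H^\infty$ with $\|\phi - f\|_\infty < 1$.

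The main obstacle is Nehari's theorem itself, which is the deep external input; the unimodularity of $\phi$ is used only to convert $M_\phi$ into an isometry on $L^2$ so that the Pythagorean identity yields the clean equivalence between bounded-belowness of $T_\phi$ and $\|H_\phi\| < 1$. Everything else is a routine Hilbert-space computation.
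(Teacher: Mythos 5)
Your proof is correct, and it is the standard textbook argument via Hankel operators and Nehari's theorem. Note, however, that the paper does not prove this statement at all --- it is quoted as background from Douglas \cite[Theorem 7.30]{douglas} --- so there is no internal proof to compare against directly. What is instructive is to compare your route with the paper's proof of the Neil-algebra analogue, Theorem \ref{thm:widomfA}, established through Lemma \ref{lem:left}. The Pythagorean step you use appears there almost verbatim: in the display
\[
\|f\|^2=\|\phi f\|^2=\|P_\alpha\phi f\|^2+\|(I-P_\alpha)\phi f\|^2=\|T^\alpha_\phi f\|^2+\|(I-P_\alpha)\phi f\|^2,
\]
$P_\alpha$ plays the role of your Szeg\"o projection $VV^*$ and $(I-P_\alpha)M_\phi$ plays the role of your Hankel operator, so from a lower bound $\epsilon$ on $T^\alpha_\phi$ one gets the Hankel-type estimate $\|(I-P_\alpha)\phi f\|\le\sqrt{1-\epsilon^2}\,\|f\|$ exactly as you do. The real divergence is the final step. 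You invoke Nehari's theorem as a black box to identify $\|H_\phi\|$ with $\operatorname{dist}(\phi,H^\infty)$; no off-the-shelf Nehari theorem is available for $\fA$, so the paper has to unwind that black box, establishing the duality $L^\infty/\fA\cong\mathscr{M}^*$ (Lemma \ref{lem:dual}) and a bespoke factorization of elements of the predual $\mathscr{M}$ through the parameterized family $H^2_\alpha$ (Lemma \ref{lem:factor}), together with the uniform-in-$\alpha$ lower bound of Lemma \ref{lem:boundedbelow}. In the unconstrained case your black box is precisely the duality $L^\infty/H^\infty\cong(H^1_0)^*$ plus the Riesz factorization of $H^1$ functions, so once Nehari is opened up the two arguments coincide; the constrained setting is exactly where that opening-up becomes genuinely nontrivial, which is why the paper spells it out.
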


Sarason \cite{sarason} established a version of Theorem \ref{thm:therealszego} for the annulus and Abrahamse \cite[Theorems 4.1 and 4.6]{Ab} established
a version of Theorem \ref{thm:realwidom} for multiply connected domains.  In this paper we establish  Szeg\"o and Widom type theorems for the \df{Neil algebra}. The Neil algebra $\fA$ \index{$\fA$} is the subalgebra of $H^\infty(\mathbb D)$ consisting
of those functions whose derivative vanishes at $0$. It is perhaps the simplest example of a constrained algebra.
As with extending classical results from the unit disc to multiply connected domains,  here it is necessary to replace $H^2$ with a family of Hilbert-Hardy spaces that parameterize the distinction between harmonic functions and the real parts of analytic functions in $\fA$ either explicitly or implicitly in the statement of the results and their proofs. In addition to the references already cited, see for instance \cite{Ab2,Ab3,mrinal,adam} for related results on multiply connected domains,
\cite{BBtH,BH1,BH2,DP, DPRS,mrinal, mrinal2} for results on constrained algebras, \cite{AS}
for results in the context of uniform algebras and finally \cite{JKM} for a Pick interpolation theorem on distinguished varieties.
Let $\fA_0$ denote those functions in $\fA$ that vanish at $0$. Hence $\fA_0=z^2 H^\infty$. \index{$\fA_0$} \index{$\Cp$}

\begin{theorem}[Szeg\"o Theorem for $\fA$]\footnote{\cite[Theorem 5.1]{AS} covers the case $\lambda=0.$}
\label{thm:SzegofA}
 Suppose $\ww>0$ is a continuous function on $\TT$ and let
\[
 \Cp =\intpp \log(\ww)\, \dtn, \ \ \
 \lambda =  \int_{0}^{2\pi} \ww(t)\exp(-it)\, \frac{dt}{2\pi}.
\]
With these notations,
\[
\inf\{ \int_{0}^{2\pi} |1-p|^2 \, \ww \frac{dt}{2\pi}: p\in \fA_0\}  =
\exp(\Cp) + \exp(-\Cp)\, |\lambda|^2.
\]
\end{theorem}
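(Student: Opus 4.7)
The strategy is to reduce the infimum to a Hilbert-space projection in the unweighted $L^2(\TT)$ via the outer factorization of $\rho$, and then identify the resulting two-term distance with the right-hand side of the theorem. Since $\rho$ is continuous and strictly positive on the compact circle, it is bounded above and below by positive constants, and $\log\rho \in L^1$. Hence there is an outer function $h \in H^\infty$ with $|h|^2 = \rho$ on $\TT$ and $h(0) > 0$, and the Herglotz representation of $\log h$ yields $h(0) = \exp(\Cp/2)$. The lower bound on $\rho$ together with the maximum principle applied to $1/h$ shows $1/h \in H^\infty$ as well. Define $V \colon L^2(\rho\,\dtn) \to L^2(\TT,\dtn)$ by $V(f) = hf$; the identity $|h|^2 = \rho$ makes $V$ isometric, and $1/h \in L^\infty$ makes it surjective, so $V$ is a unitary.

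Next I would transform the variational problem under $V$. Note that $V(1) = h$, and since both $h$ and $1/h$ lie in $H^\infty$, multiplication by $h$ is a bijection of $H^\infty$ onto itself, giving $V(\fA_0) = z^2 H^\infty$ as a subset of $L^2$. This is dense in $z^2 H^2$, so the $L^2$-closure of $V(\fA_0)$ is exactly $z^2 H^2$, and the infimum equals $\operatorname{dist}_{L^2}(h,\,z^2 H^2)^2$. Because $h \in H^2$, its projection onto the orthogonal complement of $z^2 H^2$ in $L^2$, namely $\overline{\operatorname{span}}\{e^{int}:n\le 1\}$, is just the truncation of its Fourier series to the first two terms. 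Consequently
\[
\inf\bigl\{\textstyle\intpp|1-p|^2\rho\,\dtn:p\in\fA_0\bigr\} \;=\; |\hat h(0)|^2 + |\hat h(1)|^2.
\]

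It remains to match these two coefficients with the two terms on the right-hand side. The first identification $|\hat h(0)|^2 = \exp(\Cp)$ is immediate from the value of the outer factor at the origin. The second, $|\hat h(1)|^2 = \exp(-\Cp)|\lambda|^2$, is the main obstacle: it is an identity expressing the first Taylor coefficient of the outer factor in terms of the first Fourier coefficient of $\rho$ itself, rather than in terms of $\widehat{\log\rho}(1)$, which is what one reads off directly from differentiating the Herglotz representation of $\log h$. I would attempt to close this gap by writing $\lambda = \intpp|h|^2 e^{-it}\,\dtn = \langle h,zh\rangle_{L^2}$ and exploiting the outer structure of $h$, or alternatively by computing the $2\times 2$ Gram matrix of the Riesz representers of the point evaluations $f\mapsto f(0)$ and $f\mapsto f'(0)$ on $H^2$ with the $L^2(\rho)$ inner product and reading the distance from the $(1,1)$-entry of its inverse. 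This identity-verification step is where the algebraic structure peculiar to the Neil algebra (the fact that $\fA$ is exactly the codimension-one subalgebra of $H^\infty$ determined by annihilating the $\hat f(1)$ functional) must enter in order to produce the clean expression stated in the theorem, and I anticipate that it is where the bulk of the technical work lies.
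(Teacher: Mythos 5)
Your reduction to the outer factorization and the conclusion that the infimum equals $|\hat h(0)|^2+|\hat h(1)|^2$ is exactly the paper's computation (the paper writes $h=\exp(\gamma)$ with $\gamma=\sum_{j\ge 1}\widehat{\log\rho}(j)z^j$ after normalizing $C_\rho=0$, and even your alternative Gram-matrix route yields the same $|h(0)|^2+|h'(0)|^2$). The difficulty, however, is not a gap that can be closed by harder work: the identification you flag as ``the main obstacle'' is false as an identity. From the Herglotz representation one gets
\[
\hat h(0)=\exp(C_\rho/2),\qquad \hat h(1)=\exp(C_\rho/2)\,\widehat{\log\rho}(1),
\]
so the distance you (correctly) compute is
\[
|\hat h(0)|^2+|\hat h(1)|^2=\exp(C_\rho)\bigl(1+|\widehat{\log\rho}(1)|^2\bigr),
\]
while the theorem's right-hand side is $\exp(C_\rho)+\exp(-C_\rho)\,|\hat\rho(1)|^2$. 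These coincide only when $|\hat\rho(1)|=\exp(C_\rho)\,|\widehat{\log\rho}(1)|$, which fails in general. Concretely, take $\rho=e^{\cos t}$: then $C_\rho=0$, the outer factor is $h=e^{z/2}$, and the infimum is $1+\tfrac14=\tfrac54$; but $\hat\rho(1)=\intpp e^{\cos t}e^{-it}\dtn=I_1(1)\approx 0.5652$ (a modified Bessel value), so the theorem's expression is $1+I_1(1)^2\approx 1.32\ne\tfrac54$. The paper's own proof commits precisely this error at the sentence ``Moreover, $c_0=0$ and $c_1=\lambda$, $\ldots$ by the very definitions of $C_\rho$ and $\lambda$,'' which silently asserts $\widehat{\log\rho}(1)=\hat\rho(1)$; the subsequent reproducing-kernel step $f'(0)=\gamma'(0)-\lambda=0$ needs exactly $\gamma'(0)=\lambda$, i.e. $\widehat{\log\rho}(1)=\hat\rho(1)$, to place $\exp(\gamma)-f$ in $H^2_\sigma$. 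So your instinct to pause here was correct, but the conclusion to draw is that the statement needs repair. The result your argument actually proves, and the one the paper's method also proves once the slip is corrected, is
\[
\inf\Bigl\{\intpp|1-p|^2\,\rho\,\dtn:\ p\in\fA_0\Bigr\}=\exp(C_\rho)+\exp(C_\rho)\,\bigl|\widehat{\log\rho}(1)\bigr|^2,
\]
i.e. $\lambda$ should be the first Fourier coefficient of $\log\rho$ (not of $\rho$) and the second exponential should carry $+C_\rho$, not $-C_\rho$.
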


\begin{remark}\rm
Note that $\lambda=0$ if and only if $1$ and $e^{it}$ are orthogonal in $L^2(\rho)$ and in this case it is evident that the distance from $1$ to $\mathscr{P}$
is the same as the distance from $1$ to the subspace $\fA_0$ of $\mathscr{P}$. \qed
\end{remark}

\index{$\BB$} \index{$\PP$} \index{$H^2_\alpha$} \index{$T^\alpha_\phi$}
To state the analog of Theorem \ref{thm:realwidom} for $\fA$ some notations are needed.
Let $\BB=\{(z,w)\in \CC^2 : |z^2|+|w|^2=1\}$ denote the unit ball in $\CC^2.$
To $\alpha=(a,b)\in \BB$ associate the subspace $H^2_\alpha \subset H^2$ consisting of those $f\in H^2$ such that
\[
 f(0)\, b= f^\prime(0)\, a.
\]
Let $V_\alpha:H^2_\alpha\to L^2$ denote the inclusion. Hence $P_\alpha= V_\alpha V_\alpha^*:L^2\to H^2_\alpha$ is the projection onto $H^2_\alpha$. \index{$P_\alpha$}
Given $\phi\in L^\infty$,
define $T_\phi^\alpha:H^2_\alpha\to H^2_\alpha$ by
\[
 T_\phi^\alpha = V_\alpha^* M_\phi V_\alpha.
\]
It is the \df{Toeplitz operator with symbol $\phi$ with respect to $\alpha$} \cite{adam}.
In particular, if $\phi \in \fA$ and $f\in H^2_\alpha$,  then $V^* T^\alpha_\phi f = \phi f=T^\alpha_\phi f.$

\begin{remark}\rm
\label{rem:PP}
Given $\alpha =(a,b)$ and $\beta=(c,d)$, if $ad=bc$, then $H^2_\alpha =H^2_\beta$ and likewise
$T^\alpha_\phi=T^\beta_\phi$. Thus, $\mathbb P$, complex projective space obtained by moding out $\BB$ by the relation $(a,b)=(c,d)$,
is a natural choice of parameter space. For ease of exposition we accept the redundancy inherent in the use of $\BB$. \qed
\end{remark}

\begin{theorem}[Inversion for $\fA$]
\label{thm:widomfA}
 Suppose $\phi\in L^\infty$ is unimodular.
The distance from $\phi$ to $\fA$ is strictly less than one if and only if
$T^\alpha_\phi$ is left invertible for each $\alpha\in\BB.$ Likewise, the distance from $\phi$ to the invertible elements of $\fA$ is strictly less than one if and only
if $T^\alpha_\phi$ is invertible for each $\alpha\in \BB$.
\end{theorem}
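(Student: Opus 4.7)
The plan is to reduce both equivalences to a Nehari-type identity for the constrained Hardy spaces $H^2_\alpha$. Since $\phi$ is unimodular, $M_\phi$ is unitary on $L^2$, so for every $h \in H^2_\alpha$,
\[
\|h\|^2 \;=\; \|T^\alpha_\phi h\|^2 + \|H^\alpha_\phi h\|^2, \qquad H^\alpha_\phi := (I-P_\alpha)M_\phi V_\alpha.
\]
Hence $T^\alpha_\phi$ is bounded below iff $\|H^\alpha_\phi\|<1$; using $(T^\alpha_\phi)^* = T^\alpha_{\bar\phi}$, $T^\alpha_\phi$ is invertible iff both $\|H^\alpha_\phi\|<1$ and $\|H^\alpha_{\bar\phi}\|<1$. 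The two directions of each equivalence can then be tackled independently once $\|H^\alpha_\phi\|$ is computed.

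The technical heart is the Nehari-type identity
\[
\|H^\alpha_\phi\| \;=\; \operatorname{dist}_{L^\infty}(\phi, \mathfrak M_\alpha), \qquad \mathfrak M_\alpha := \{\psi \in L^\infty : \psi H^2_\alpha \subseteq H^2_\alpha\}.
\]
The bound ``$\leq$'' follows from $H^\alpha_\psi = 0$ for $\psi \in \mathfrak M_\alpha$ together with $\|H^\alpha_{\phi-\psi}\| \leq \|\phi-\psi\|_\infty$; the reverse is the main obstacle. I would prove it either by a Hahn--Banach duality argument that identifies a predual of $L^\infty/\mathfrak M_\alpha$ with the closure in $L^1$ of $\{f\bar g : f \in H^2_\alpha,\ g \in L^2 \ominus H^2_\alpha\}$ (a Riesz-type factorization for the relevant preannihilator), or by a commutant-lifting / Sz.-Nagy--Foias dilation argument, exploiting that $M_{z^2}$ preserves each $H^2_\alpha$ and that these subspaces form the natural reproducing kernel Hilbert spaces intrinsically tied to the representation theory of $\fA$.

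Next, identify $\mathfrak M_\alpha$. Since $z^2 H^2 \subseteq H^2_\alpha$, every multiplier lies in $H^\infty$. For $\alpha = (a,b) \in \BB$ with $a \neq 0$, the function $f(z) = a+bz$ lies in $H^2_\alpha$ with $f(0)=a \neq 0$, and enforcing $\psi f \in H^2_\alpha$ forces $\psi'(0)\cdot a^2 = 0$, so $\mathfrak M_\alpha = \fA$; for $\alpha=(0, e^{i\theta})$, $H^2_\alpha = zH^2$ and $\mathfrak M_\alpha = H^\infty$. In every case $\fA \subseteq \mathfrak M_\alpha$. With these ingredients the first equivalence follows: the forward direction because $\operatorname{dist}(\phi, \mathfrak M_\alpha) \leq \operatorname{dist}(\phi, \fA) < 1$ for every $\alpha$, and the reverse by specializing to $\alpha = (1,0)$, where $\mathfrak M_\alpha = \fA$.

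For the invertibility statement, applying the first equivalence to both $\phi$ and $\bar\phi$ shows that $T^\alpha_\phi$ is invertible for every $\alpha$ iff $\operatorname{dist}(\phi,\fA)<1$ \emph{and} $\operatorname{dist}(\bar\phi,\fA)<1$. Converting this pair of conditions into $\operatorname{dist}(\phi, \fA^{-1}) < 1$ is a secondary obstacle, since a witness $g \in \fA$ for $\operatorname{dist}(\phi,\fA)<1$ need not be invertible in $\fA$ (e.g.\ $\phi = g = z^2$). Producing a single invertible approximant requires combining witnesses for $\phi$ and $\bar\phi$, for instance via an inner--outer factorization or a Wiener/logarithmic argument exploiting that the product of such witnesses is close to $\phi\bar\phi = 1$; conversely, passing from an invertible witness $g$ for $\phi$ to one for $\bar\phi$ requires a finer estimate than the naive bound $\|\bar\phi - g^{-1}\|_\infty \leq \|\phi-g\|/(1-\|\phi-g\|)$, which only suffices when $\|\phi-g\|<1/2$.
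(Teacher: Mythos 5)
Your strategy hinges on the Nehari-type identity $\|H^\alpha_\phi\| = \operatorname{dist}_{L^\infty}(\phi,\mathfrak M_\alpha)$ for a \emph{fixed} $\alpha$, and it is exactly this identity that fails. The forward inequality ``$\le$'' is fine and gives you the easy direction of the theorem, but the hard inequality ``$\ge$'' --- which you invoke by specializing to $\alpha=(1,0)$, where $\mathfrak M_\alpha=\fA$ --- does not hold for any single $\alpha$ with $a\ne0$. The duality argument you propose would require that every unit-norm $h\in\mathscr M$ factor as $h=fg$ with $f\in H^2_\alpha$, $\overline g\in (H^2_\alpha)^\perp$, and $\|f\|_2\,\|g\|_2=\|h\|_1$, for that one $\alpha$. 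This is false: take $h=2+z+\overline z$, so $zh=(1+z)^2$. Any factorization of $h$ of the above type with $\alpha=(1,0)$ gives $zh = f\cdot(zg)$ with $f, zg\in H^2$ and $\|f\|_2\|zg\|_2=\|zh\|_1$; but $(1+z)^2$ is outer, so by the essential uniqueness of the extremal Riesz factorization, $f$ must be a unimodular multiple of $1+z$, which has $f'(0)\ne0$ and hence does \emph{not} lie in $H^2_{(1,0)}$. In other words, the paper's Lemma \ref{lem:factor} produces an $\alpha$ that \emph{depends on} $h$ (it is $\alpha\sim(F(0),F'(0))$ for the extremal outer factor $F$ of $zh$), and it is precisely this dependence that forces the theorem to quantify over \emph{all} $\alpha\in\BB$. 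The correct statement, implicit in the paper's Lemmas \ref{lem:boundedbelow} and \ref{lem:left}, is $\sup_{\alpha}\|H^\alpha_\phi\|=\operatorname{dist}(\phi,\fA)$, proved via the varying-$\alpha$ factorization plus a compactness/norm-continuity argument (Lemma \ref{lem:Pcont}); no single $\alpha$ suffices. (For $\alpha=(0,1)$ your identity \emph{is} true --- it is the classical Nehari theorem via the unitary $H^2\cong zH^2$ --- but there $\mathfrak M_\alpha=H^\infty$ and the conclusion $\operatorname{dist}(\phi,H^\infty)<1$ is too weak.)

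The invertibility half has a second gap, which you candidly flag but do not resolve: converting ``$\operatorname{dist}(\phi,\fA)<1$ and $\operatorname{dist}(\overline\phi,\fA)<1$'' into ``$\operatorname{dist}(\phi,\fA^{-1})<1$.'' Your suggested routes (inner--outer factorization, Wiener-type arguments, or the naive $\|\overline\phi-g^{-1}\|$ estimate) are not carried out and, as you note, the naive bound only works for $\|\phi-g\|<1/2$. The paper sidesteps this entirely: it first produces a witness $\psi\in\fA$ with $\|\phi-\psi\|<1$ (Lemma \ref{lem:left}), observes that $T^\alpha_{\overline\phi}T^\alpha_\psi = T^\alpha_{\overline\phi\psi}$ is invertible because $\|1-\overline\phi\psi\|<1$, uses invertibility of $T^\alpha_{\overline\phi}$ to conclude $T^\alpha_\psi$ is invertible, and then applies the reproducing-kernel mechanism of Lemma \ref{lem:inverse} --- the eigenvector identity $(T^\alpha_\psi)^*k^\alpha_w=\overline{\psi(w)}k^\alpha_w$ --- to show that $\psi$ itself is invertible in $\fA$. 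That is, the witness already found is automatically invertible; no combining or repair step is needed. Without some analogue of Lemma \ref{lem:inverse}, your outline leaves this direction genuinely open.
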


Before turning to the proofs of Theorems \ref{thm:SzegofA} and \ref{thm:widomfA}, we pause to introduce some conventions
and basic background on the spaces $H^2_\alpha.$
For $p=2,\infty$,
the standard identification of $H^p(\DD)$ with $H^p(\TT)$, where the latter is viewed as the subspace of $L^p(\TT)$ consisting
of those $f$ with vanishing negative Fourier coefficients, will be used routinely and without comment.
Let $H^2_1$ denote the subspace of $H^2$ consisting of those  $f\in H^2$ whose Fourier coefficient
\[
 \hat{f}(1) = \intpp f\, e^{-it} \, \dtn = 0.
\]
Evidently, $H^2_1$ is the closure of $\fA$ in $H^2$.
\index{$H^2(\TT)$} \index{$H^2_1$} \index{$\hat{f}(1)$}
The following Lemma can be found in \cite{DPRS} for instance.
The first part follows from the easily
verified fact that $\{a+bz,z^n:n\ge 2\}$ is an orthonormal basis for $H^2_\alpha$;
and the moreover part, from a standard reproducing kernel Hilbert space argument.

\begin{lemma}
 \label{lem:ker0}
  For each $\alpha=(a,b)\in \PP$,  the space $H^2_\alpha$ has reproducing kernel,
\[
 k^\alpha_w(z) = k^\alpha(z,w) = (a+bz)\overline{(a+bw)} + \frac{z^2 {\overline{w^2}}}{1-z\overline{w}}, \ \ \ z,w\in \DD.
\]
In particular,
\[
\|k^\alpha_0\|^2 = k^{\alpha}(0,0)=|a|^2,
\]
and thus  $k^\alpha_w\ne 0$ with the exception of $\alpha=(0,1)$ and $w=0.$

Moreover, if $\psi\in \fA$ and $w\in\DD$, then $(T^\alpha_\psi)^* k^\alpha_w = \overline{\psi(w)}k^\alpha_w$.
\end{lemma}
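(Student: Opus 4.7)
The plan is to carry out the two halves of the lemma along the lines the authors indicate: first produce an explicit orthonormal basis for $H^2_\alpha$ and sum the reproducing kernel from it, then deduce the eigenvector relation by the standard reproducing-kernel argument.

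For the basis, I first check that $a+bz$ and $z^n$ (for $n\ge 2$) lie in $H^2_\alpha$. For $f=a+bz$ one has $f(0)=a$, $f'(0)=b$, so $f(0)b=ab=f'(0)a$; for $f=z^n$ with $n\ge 2$ both $f(0)$ and $f'(0)$ vanish. Orthonormality is immediate from the standard orthonormality of $\{z^n\}_{n\ge 0}$ in $H^2$ together with $\|a+bz\|^2=|a|^2+|b|^2=1$ (since $\alpha\in\BB$). For completeness, suppose $g=\sum_{n\ge 0} c_n z^n\in H^2_\alpha$, so $c_0 b=c_1 a$. If $a\ne 0$ set $\lambda=c_0/a$ (so $\lambda b=c_1$); if $a=0$ then $c_0=0$ and $b\ne0$, so set $\lambda=c_1/b$. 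Either way $g-\lambda(a+bz)$ lies in the closed span of $\{z^n:n\ge2\}$.

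Given the basis, the reproducing kernel is
\[
 k^\alpha(z,w)=(a+bz)\overline{(a+bw)}+\sum_{n\ge 2} z^n\overline{w}^{\,n}
  =(a+bz)\overline{(a+bw)}+\frac{z^2\overline{w^2}}{1-z\overline{w}},
\]
by summing the geometric tail. Setting $z=w=0$ yields $k^\alpha(0,0)=|a|^2$. For $w\ne 0$, the diagonal value $k^\alpha(w,w)=|a+bw|^2+|w|^4/(1-|w|^2)$ is strictly positive because the second summand is; and for $w=0$ it equals $|a|^2$, which vanishes only in the exceptional case $\alpha=(0,1)$.

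For the moreover part, the essential point is that multiplication by $\psi\in\fA$ preserves $H^2_\alpha$: if $f\in H^2_\alpha$, then $(\psi f)'(0)=\psi'(0)f(0)+\psi(0)f'(0)=\psi(0)f'(0)$ (using $\psi'(0)=0$), so $(\psi f)(0)b=\psi(0)f(0)b=\psi(0)f'(0)a=(\psi f)'(0)a$. Thus $T^\alpha_\psi f=\psi f$, and the standard reproducing kernel computation
\[
 \langle f,(T^\alpha_\psi)^*k^\alpha_w\rangle=\langle \psi f,k^\alpha_w\rangle=\psi(w)f(w)=\langle f,\overline{\psi(w)}k^\alpha_w\rangle
\]
delivers $(T^\alpha_\psi)^*k^\alpha_w=\overline{\psi(w)}k^\alpha_w$. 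No step is really an obstacle—the only place where genuine care is required is the spanning step, and especially noting that the structural condition $\psi'(0)=0$ cutting out $\fA$ is exactly what makes $H^2_\alpha$ an invariant subspace for $M_\psi$ and therefore ensures $T^\alpha_\psi$ acts as multiplication.
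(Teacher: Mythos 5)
Your proof is correct and follows exactly the route the paper sketches: verify that $\{a+bz,\,z^n : n\ge 2\}$ is an orthonormal basis for $H^2_\alpha$, sum the kernel, and then run the standard reproducing-kernel eigenvector argument (the key observation being that $\psi'(0)=0$ makes $H^2_\alpha$ invariant under $M_\psi$, so $T^\alpha_\psi$ acts as multiplication). The paper itself only gestures at this argument and cites \cite{DPRS}, so your write-up is a clean filling-in of precisely the steps the authors indicate.
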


\section{Proof of Theorem \ref{thm:SzegofA}}
As a first step, observe that it suffices to prove the theorem under the additional hypothesis that $\Cp=0$.
Indeed, if not, let
 $\tww=\exp(-\Cp)\, \rho$, so that  $\intpp \log(\tww) \,\dtn =0.$  In particular, $C_{\tww}=0$ and  with
\[
 \tilde{\lambda} = \intpp \tww \, \exp(-it)\, \dtn = \exp(-\Cp)\,  \lambda,
\]
if Theorem \ref{thm:SzegofA} holds for $\tww$, then
\[
 \inf \{ \intpp |p-1|^2 \, \tww \, \dtn: p\in \fA_0\} = 1+|\tilde{\lambda}|^2.
\]
Thus,
\[
\begin{split}
 \inf \{ \intpp |p-1|^2 \, \ww \, \dtn: p\in \fA_0\}
 = &\exp(\Cp) \inf \{ \intpp |p-1|^2 \, \tww \, dtn: p\in \fA_0\}\\
 = & \exp(\Cp) (1+|\tilde{\lambda}|^2) = \exp(\Cp)+\exp(-\Cp)\, |\lambda|^2
\end{split}
\]
as claimed.   Accordingly, for the remainder of the proof,  assume $\Cp=0$.

Let
\[
\dd = \frac{1}{\sqrt{1+|\lambda|^2}} \, (1,\lambda) \in \BB.
\]
In particular,
\[
 \|k^{\dd}_0\|^2 = \frac{1}{1+|\lambda|^2}.
\]

Note that, as sets, $L^2(\ww)$ and $L^2$ are the same and thus we may consider $H^2$ as a Hilbert space
with the alternate inner product,
\[
 \langle f,g\rangle_\ww = \intpp f\overline{g}\, \ww\dtn.
\]
To keep the distinction clear, denote this latter space by $H^2(\ww)$.
Since the closure of $\fA_0$ in $H^2(\ww)$ is $z^2 H^2=z^2 H^2(\ww)$, the objective
is to find the $H^2(\rho)$-distance from $1$ to $z^2 H^2.$ That is, to show
\[
\inf \{ \intpp |p-1|^2 \, \tww \, \dtn: f\in z^2 H^2\} = 1+|\tilde{\lambda}|^2.
\]

Since $\ww$ is continuous and strictly positive, $\log(\ww)$ is continuous. It has Fourier series expansion
\[
 \log(\ww) = \sum_{j=-\infty}^\infty c_j  e^{ijt},
\]
where, because it is real-valued, $c_{-j}=\overline{c_j}.$ 
Moreover, $c_0=0$
and $c_1=\lambda,$ since $\Cp=0$ and by the very definitions of $\Cp$ and $\lambda.$ Letting $\uu$ denote the $H^2$ function represented
by the series
\[
 \uu = \sum_{j=1}^\infty c_j e^{ijt},
\]
it follows that $\log(\ww) = \uu +\uu^*$ as elements of $L^2$.
Further, since
\[
 |\exp(\pm \uu)|^2 = \exp\left(\pm (\uu+\uu^*)\right) = \ww^{\pm 1},
\]
both $\exp(\pm \uu)$ are in $H^\infty$.
The mapping $U:H^2(\rho)\to H^2$ defined by $Uf=\exp(\uu) f$ is a unitary map
with inverse $U^*f=\exp(-\uu) f$. Moreover, $U(z^2H^2)=z^2 H^2$.
Thus, the aim is to find the $H^2$-distance from $\exp(\uu)$ to $z^2H^2$.

Given $f\in z^2 H^2$, let $g=\exp(\uu)-f$ and estimate, using $g(0)=1$ and the Cauchy-Schwarz inequality,
\begin{equation}
\label{eq:usesCS-alt}
 \begin{split}
  \|\exp(\uu)-f\|^2
     =  & \|g\|^2 \\
      \ge & \frac{ |\langle g, k^{\dd}_0 \rangle|^2}{ \|k^{\dd}_0\|^2} \\
   = & |g(0)|^2\, (1+|\lambda|^2) \\
   = & 1+|\lambda|^2.
 \end{split}
\end{equation}
Let
\[
 f= \exp(\uu)- (1+|\lambda|^2)k_0^{\dd}
\]
and note $f(0)=0$ and $f^\prime(0)=\uu^\prime(0)-\lambda=0$. Thus $f\in z^2 H^2$ and, with this choice of
$f$, equality holds in the Cauchy-Schwarz inequality in equation \eqref{eq:usesCS-alt}.

\section{Toeplitz operators on $\fA$}
This section contains the proof of Theorem \ref{thm:widomfA}.

\begin{lemma}
\label{lem:norm}
If $\phi\in L^\infty$,  then  $\|T^\alpha_\phi\| =\|\phi\|$ and
$(T^\alpha_\phi)^* = T^\alpha_{\overline{\phi}}$.
\end{lemma}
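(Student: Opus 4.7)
The plan is to handle the adjoint identity and the upper norm bound directly from the definitions, and then establish the matching lower bound by an approximate-eigenvector argument that exploits the inclusion $z^2 H^2 \subset H^2_\alpha$.

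The adjoint formula is essentially formal: since $T^\alpha_\phi = V_\alpha^* M_\phi V_\alpha$, taking adjoints and using $M_\phi^* = M_{\overline{\phi}}$ on $L^2$ yields $(T^\alpha_\phi)^* = V_\alpha^* M_{\overline{\phi}} V_\alpha = T^\alpha_{\overline{\phi}}$. The upper bound $\|T^\alpha_\phi\| \le \|\phi\|_\infty$ is equally immediate: $V_\alpha$ is an isometric inclusion, so $\|V_\alpha\| = \|V_\alpha^*\| = 1$, and $\|M_\phi\|_{L^2 \to L^2} = \|\phi\|_\infty$; composition gives the bound.

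The substance lies in the reverse inequality. The crucial observation is that any $f \in z^2 H^2$ satisfies $f(0) = f'(0) = 0$, so the defining condition $f(0)b = f'(0)a$ is automatic, and hence $z^2 H^2 \subset H^2_\alpha$ for every $\alpha \in \BB$. Given $\varepsilon > 0$, I would choose $f \in L^2$ with $\|f\| = 1$ and $\|\phi f\|_2 \ge \|\phi\|_\infty - \varepsilon$, and set $g_n = z^n f$; since multiplication by $z^n$ is unitary on $L^2(\TT)$, $\|g_n\| = 1$ and $\|\phi g_n\| = \|\phi f\|$. Because $\widehat{g_n}(k) = \hat{f}(k-n)$, the tail computation $\|g_n - P_{z^2 H^2} g_n\|^2 = \sum_{k < 2} |\hat{f}(k-n)|^2 \to 0$ as $n \to \infty$, and the same calculation applied to $\phi g_n = z^n(\phi f)$ shows that $\|\phi g_n - P_{z^2 H^2}(\phi g_n)\| \to 0$. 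Since $z^2 H^2 \subset H^2_\alpha$, the distances to $H^2_\alpha$ are no larger, so $h_n := P_\alpha g_n \in H^2_\alpha$ satisfies $\|h_n\| \to 1$ and
\[
\|T^\alpha_\phi h_n\| \;=\; \|P_\alpha(\phi h_n)\| \;\longrightarrow\; \|\phi f\| \;\ge\; \|\phi\|_\infty - \varepsilon,
\]
from which $\|T^\alpha_\phi\| \ge \|\phi\|_\infty - \varepsilon$ and letting $\varepsilon \to 0$ closes the proof.

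I do not anticipate any real obstacle: the adjoint identity is purely algebraic, and the norm lower bound is a small adaptation of the classical Hardy-space calculation made available by the uniform containment $z^2 H^2 \subset H^2_\alpha$. The only point worth noting is that this containment, and hence the whole approximate-eigenvector argument, works for every $\alpha \in \BB$, including the exceptional point $\alpha = (0,1)$ flagged in Lemma~\ref{lem:ker0}, so no case analysis on $\alpha$ is required.
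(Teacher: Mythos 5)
Your proof is correct. The adjoint identity and the upper bound coincide with the paper's argument, but for the lower bound $\|T^\alpha_\phi\| \ge \|\phi\|_\infty$ you take a genuinely different route, even though both hinge on the same key fact that $z^2 H^2 \subset H^2_\alpha$ uniformly in $\alpha$. The paper introduces the unitary $U\colon H^2 \to z^2 H^2$, $Uf = z^2 f$, shows that the compression of $M_\phi$ to $z^2 H^2$ is unitarily equivalent to the classical Toeplitz operator $T_\phi$, and then invokes the known fact $\|T_\phi\| = \|\phi\|_\infty$ from \cite{M-AR}. You instead re-prove that classical fact in place, by an approximate-eigenvector argument: pick $f$ nearly maximizing $\|\phi f\|_2$, push it forward by $z^n$, and observe that both $z^n f$ and $\phi z^n f$ are asymptotically in $z^2 H^2 \subset H^2_\alpha$, so the projections lose nothing in the limit. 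What the paper's approach buys is brevity, deferring the analytic work to a citation; what yours buys is self-containment and transparency, and it also makes explicit that no case distinction on $\alpha$ (in particular at $\alpha = (0,1)$) is needed, a point the paper leaves implicit. One cosmetic remark: the unit circle is $\{|z^2| + |w|^2 = 1\}$ in the paper's notation $\BB$ (a typographical equality, not a strict ball), but this has no bearing on your argument since you only use the containment $z^2 H^2 \subset H^2_\alpha$, which is independent of $\alpha$.
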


\begin{proof}
Since $M_{\phi}^* = M_{\overline{\phi}}$, it follows that $(T^\alpha_\phi)^* = V_{\alpha}^* M_{\phi}^* V_{\alpha} = V_{\alpha}^* M_{\overline{\phi}} V_{\alpha} =  T^\alpha_{\overline{\phi}}.$
Since $V_{\alpha}$ is an isometry, it follows that $\|T^\alpha_\phi\| \le \|M_{\phi}\| = \|\phi\|.$  Now let $V:H^2\to L^2$ and $W:z^2 H^2\to L^2$ denote the inclusion maps.
In particular, $V^*M_\phi V$ is $T_\phi,$ the usual Toeplitz operator with symbol $\phi$.  On the other hand,
$W^*M_\phi W = W^* T^\alpha_\phi W$.  With $U:H^2\to z^2 H^2$ given by $Uf=z^2f$, it follows that $U$ is unitary and, for $f,g\in H^2$,
\[
 \langle M_\phi WUf,WUg\rangle = \langle z^2\phi f,z^2 g\rangle = \langle \phi f,g\rangle = \langle  M_\phi f,g\rangle =\langle V^* M_\phi Vf,g\rangle.
\]
Hence $U^* W^* M_\phi W U = V^* M_\phi V=T_\phi$ and consequently $W^* T^\alpha_\phi W$ is unitarily equivalent to $T_\phi$.  Hence $\|T^\alpha_\phi\|\ge \|T_\phi\|$. Since,  as is well known  that $\|T_\phi\|=\|\phi\|$ (\cite{M-AR}), the result follows.
\end{proof}

Let $\mathscr{B}(L^2)$ denote the bounded linear operators on $L^2.$ 

\begin{lemma}
\label{lem:Pcont}
Giving $\BB$ its usual topology and $\mathscr{B}(L^2)$ its norm topology, the mapping $\BB \ni \alpha \to P_\alpha\in \mathscr{B}(L^2)$ is continuous.
\end{lemma}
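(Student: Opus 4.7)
The plan is to exploit the explicit orthogonal decomposition of $H^2_\alpha$ inside $L^2$. From the orthonormal basis $\{a+bz,\,z^n: n\ge 2\}$ for $H^2_\alpha$ mentioned after Lemma \ref{lem:ker0}, one sees that
\[
 H^2_\alpha = \CC\,(a+bz) \oplus z^2 H^2,
\]
where the two summands are orthogonal in $L^2$ and the second summand is independent of $\alpha$. Consequently
\[
 P_\alpha = R_\alpha + Q,
\]
where $Q$ denotes the (fixed) $L^2$-orthogonal projection onto $z^2H^2$ and $R_\alpha$ is the rank-one $L^2$-projection onto $\Span\{a+bz\}$. Since $Q$ does not depend on $\alpha$, continuity of $\alpha\mapsto P_\alpha$ is equivalent to continuity of $\alpha\mapsto R_\alpha$.

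Next I would observe that $v_\alpha := a+bz$ has unit $L^2$-norm since $|a|^2+|b|^2=1$ for $\alpha=(a,b)\in\BB$, so
\[
 R_\alpha f = \langle f,v_\alpha\rangle_{L^2}\, v_\alpha.
\]
The affine map $\CC^2 \to L^2$ given by $(a,b)\mapsto a+bz$ is obviously continuous (bounded linear), so $\alpha\mapsto v_\alpha$ is continuous in $L^2$-norm. It therefore suffices to check that the assignment $v\mapsto v\langle\cdot,v\rangle$ from the unit sphere of $L^2$ into $\mathscr{B}(L^2)$ is continuous in operator norm. For unit vectors $v,w\in L^2$, a direct estimate via
\[
 v\langle f,v\rangle - w\langle f,w\rangle = (v-w)\langle f,v\rangle + w\langle f,v-w\rangle
\]
together with Cauchy--Schwarz yields $\|R_v-R_w\|\le 2\|v-w\|$, giving the desired Lipschitz bound.

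Combining these observations,
\[
 \|P_\alpha - P_{\alpha_0}\| = \|R_\alpha - R_{\alpha_0}\| \le 2\|v_\alpha - v_{\alpha_0}\|_{L^2} \longrightarrow 0
\]
as $\alpha\to\alpha_0$ in $\BB$. There is no real obstacle here; the only point requiring any care is the verification of the orthogonal decomposition $H^2_\alpha = \CC(a+bz)\oplus z^2H^2$, which however is immediate from the orthonormal basis already recorded for $H^2_\alpha$.
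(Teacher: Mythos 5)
Your argument is essentially identical to the paper's: both decompose $P_\alpha = F_\alpha F_\alpha^* + Q$ with $F_\alpha = a+bz$ a unit vector and $Q$ the fixed projection onto $z^2H^2$, and both obtain the Lipschitz estimate by telescoping the difference of rank-one projections and applying Cauchy--Schwarz. The proof is correct.
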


\begin{proof}
Since $\{a+bz,z^n:n\ge 2\}$ is an orthonormal basis for $H^2_\alpha$, if
 $f=\sum f_nz^n \in H^2$ and $\alpha =(a,b)\in \BB$, then
\[
 P_\alpha f = (\overline{a}f_0 + \overline{b}f_1)(a+bz) + \sum_{n=2}^\infty f_n z^n.
\]
Thus, letting $Q$ denote the projection onto $z^2 H^2$ and $F_\alpha = (a+bz)$ (a unit vector),
\[
 P_\alpha = F_\alpha F_\alpha^* + Q,
\]
 where $F_\alpha F_\alpha^*:L^2\to L^2$ is the rank one projection operator,
\[
 F_\alpha F_\alpha^* f = \langle f,F_\alpha\rangle F_\alpha = (\overline{a} f_0 +\overline{b} f_1) F_\alpha.
\]
Thus, if $\beta=(c,d)\in \BB$, then
\[
 P_\alpha -P_\beta = F_\alpha F_\alpha^* - F_\beta F_\beta^* = F_\alpha(F_\alpha-F_\beta)^* + (F_\alpha-F_\beta)F_\beta^*.
\]
Since $\|F_\alpha -F_\beta\| = \|\alpha -\beta\|$, the result follows.
\end{proof}

Let $\mathscr{M}\subset L^1$ denote the subspace consisting of 
those $L^1$ functions with Fourier series of the form \index{$\mathscr{M}$}
\begin{equation}
\label{eq:dual}
  \hat{f}(-1)\exp(-it) +\sum_{j=1}^\infty \hat{f}(j) \exp(ijt).
\end{equation}
The following lemma is the $\mathscr{M}$ version of the well known factorization theorem for $H^1$ functions.

\begin{lemma}
\label{lem:factor}
 If $h\in \mathscr{M}$, then there exist
\begin{enumerate}[(i)]
\item  $\alpha\in\PP;$
\item  $f\in H^2_\alpha;$ and
\item  $g\in L^2$
\end{enumerate}
such that
\begin{enumerate}[(a)]
 \item  $\overline{g} \in (H^2_\alpha)^\perp$;
 \item $h=fg$;  and
 \item $\|h\|_1 =\|f\|_2\, \|g\|_2$.
\end{enumerate}
\end{lemma}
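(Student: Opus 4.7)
The plan is to reduce to the classical $H^1$-factorization theorem by multiplying by $e^{it}$. The space $\mathscr{M}$ consists of those $h\in L^1$ with $\hat{h}(n)=0$ for $n\le -2$ and for $n=0$. Hence $\tilde{h}:=e^{it}h$ satisfies $\hat{\tilde h}(n)=0$ for $n\le -1$ and $\hat{\tilde h}(1)=0$, placing $\tilde h$ in $H^1$ with $\tilde h'(0)=0$; moreover $|\tilde h|=|h|$, so $\|\tilde h\|_1=\|h\|_1$.

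Next I invoke the classical $H^1$-factorization to write $\tilde h=uv$ with $u,v\in H^2$ and $\|u\|_2\|v\|_2=\|\tilde h\|_1$. Concretely, let $I$ be the inner part of $\tilde h$ and $O$ the outer function with $|O|=|\tilde h|^{1/2}$, and set $u=O$, $v=IO$. I then set $f:=u$ and $g:=e^{-it}v$. These satisfy $fg=e^{-it}\tilde h=h$, $f\in H^2$, $g\in L^2$, and $\|f\|_2\|g\|_2=\|h\|_1$. The remaining task is to choose $\alpha=(a,b)\in\PP$ so that $f\in H^2_\alpha$ and $\bar g\in(H^2_\alpha)^\perp$.

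Since $\{a+bz\}\cup\{z^n:n\ge 2\}$ is an orthonormal basis for $H^2_\alpha$, a direct Fourier-coefficient computation shows $(H^2_\alpha)^\perp\subset L^2$ consists of those $\phi$ with $\hat\phi(n)=0$ for $n\ge 2$ and $\bar a\hat\phi(0)+\bar b\hat\phi(1)=0$. Applied to $\phi=\bar g$ and translated via $g=e^{-it}v$, this becomes $a v'(0)+b v(0)=0$, the vanishing of the Fourier coefficients of $g$ below index $-1$ being automatic. Meanwhile $f=u\in H^2_\alpha$ is equivalent to $bu(0)=au'(0)$. These two linear constraints on $(a,b)$ are compatible thanks to the key identity $u(0)v'(0)+u'(0)v(0)=(uv)'(0)=\tilde h'(0)=0$.

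If $(u(0),u'(0))\ne 0$ I choose $\alpha=(u(0),u'(0))/\|(u(0),u'(0))\|$; the constraint $bu(0)=au'(0)$ then holds automatically, and $av'(0)+bv(0)=\tilde h'(0)/\|(u(0),u'(0))\|=0$. If instead $u(0)=u'(0)=0$, the constraint on $f$ is vacuous and I may choose any unit $\alpha$ satisfying the single complex-linear equation $av'(0)+bv(0)=0$; such an $\alpha$ exists unless $v(0)=v'(0)=0$, in which case any $\alpha\in\PP$ works. The main obstacle I anticipate is bookkeeping rather than depth: pinning down the Fourier-coefficient description of $(H^2_\alpha)^\perp$ precisely and dispatching the degenerate cases. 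Once those are in place, the compatibility identity $\tilde h'(0)=0$ delivers the admissible $\alpha$.
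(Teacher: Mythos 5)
Your proof is correct and follows essentially the same route as the paper's: multiply by $e^{it}$ to land in $H^1$ with vanishing derivative at the origin, invoke $H^1$ factorization, shift one factor back by $e^{-it}$, and use the identity $u(0)v'(0)+u'(0)v(0)=\tilde h'(0)=0$ to pick a compatible $\alpha$. The only cosmetic difference is that the paper fixes $\alpha$ from $F$, writes $F=c(a+bz)+z^2F_0$ and $G=d(a-bz)+z^2G_0$, and verifies orthogonality by direct integration, whereas you package the same computation as a Fourier-coefficient description of $(H^2_\alpha)^\perp$; your explicit handling of the degenerate case $u(0)=u'(0)=0$ is in fact slightly more careful than the paper's, which glosses over the case $c=0$.
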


\begin{proof}
 The function $\psi=zh$ is in $H^1$ and therefore there exists $F,G\in H^2$ such that $zh=FG$ and $\|h\|_1 = \|\psi\|_1 = \|F\|_2\, \|G\|_2$
\cite[Corollary 6.27]{douglas}.
 Moreover, since $\psi^\prime(0)=0$, it follows that $F^\prime(0)\, G(0)+F(0)\, G^\prime(0)=0$. There is an $\alpha=(a,b)\in \BB$ such that $F\in H^2_\alpha$.
(Indeed, simply choose $\alpha\in \PP$ such that $a F^\prime(0)=bF(0)$.)  Thus there is a constant $c$ and an $H^2$ function $F_0$ such that
\[
 F=c(a+bz) + z^2 F_0.
\]
Hence, there is a constant $d$ and $H^2$ function $G_0$ such that
\[
G=d(a-bz)+z^2 G_0.
\]
Let $g=\overline{z} G,$ in which case $h=Fg$ and $\|g\|_2 =\|G\|_2$. Moreover,
\[
\langle a+bz,\overline{g}\rangle = d \intpp (a+bz)\left ( d(a\overline{z}-b) + zG_0\right)\, \dtn =0
\]
and, for $n\ge 2$,
\[
\langle z^n, \overline{g}\rangle =\intpp z^n \left(d(a\overline{z}-b)+zG_0\right)\, \dtn =0.
\]
Hence $\overline{g}\in (H^2_\alpha)^\perp$.
\end{proof}

Recall   $(L^1)^*=L^\infty$  with the equality interpreted as the isometric isomorphism determined by the mapping that
assigns to $\phi\in L^\infty$  the linear functional $\lambda_\phi:L^1\to\CC$ given by
\[
\lambda_\phi(\psi) = \intpp \phi \, \psi \, \dtn.
\]
Moreover, letting
\[
\mathscr{M}^\perp :=\{\phi\in L^\infty: \intpp \phi \, \psi \dtn = 0, \, \mbox{ for all } \psi\in \mathscr{M}\},
\]
and $\pi:L^\infty \to L^\infty/\mathscr{M}^\perp$ denote the quotient mapping,
the mapping $\Lambda:L^\infty/\mathscr{M}^\perp \to \mathscr{M}^*$ given by
\[
 \Lambda(\pi(\lambda_\phi)) = (\lambda_\phi)|_{\mathscr{M}},
\]
is an isometric isomorphism. Finally, if $\phi\in \mathscr{M}$ and $\psi\in \fA$, then
\[
 \intpp \phi\, \psi\, \dtn =0.
\]
Thus, $\fA\subset \mathscr{M}^\perp.$ On the other hand, $e^{ijt}\in \mathscr{M}$ for $j=-1,1,2,\dots$
and therefore if $\psi\in \mathscr{M}^\perp$, then its Fourier series has the form
\[
 \psi = \hat{\psi}(0) + \sum_{j=2}^\infty \hat{\psi}(j) e^{ijt}.
\]
Hence $\psi\in \fA$  and thus we may view $\Lambda$ as having domain $L^\infty/\fA$. The following lemma
summarizes the discussion (see \cite[page 88]{conway}).

\begin{lemma}
\label{lem:dual}
 $\Lambda:L^\infty /\fA\to \mathscr{M}^*$ defined by sending $\pi(\phi)$ to the linear functional $\tilde{\lambda}_\phi:\mathscr{M}\to \CC$ given by
\[
 \tilde{\lambda}_\phi(f) = \intpp \phi \, f \, \dtn
\]
 is an isometric isomorphism.
\end{lemma}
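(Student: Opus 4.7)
The plan is to present this lemma as a clean repackaging of the preceding discussion, so my proof has two ingredients: the standard $L^1$--$L^\infty$ duality applied to the closed subspace $\mathscr{M}\subset L^1$, and the identification $\mathscr{M}^\perp = \fA$ as subspaces of $L^\infty$.

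First, I would invoke the standard Banach-space fact that if $Y$ is a closed subspace of a Banach space $X$, then the restriction map $\ell \mapsto \ell|_Y$ induces an isometric isomorphism $X^*/Y^\perp \to Y^*$; this is the Hahn--Banach theorem (any $\ell\in Y^*$ extends to $X^*$ without increase in norm, and two extensions differ by an element of $Y^\perp$). Applying this to $X=L^1$ and $Y=\mathscr{M}$, and using the isometric identification $(L^1)^*\cong L^\infty$ via $\phi\mapsto\lambda_\phi$, one obtains at once an isometric isomorphism $L^\infty/\mathscr{M}^\perp \to \mathscr{M}^*$ that sends the coset of $\phi$ to $(\lambda_\phi)|_{\mathscr{M}}$. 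I would note here that $\mathscr{M}$ is closed in $L^1$ because it is the kernel of the three bounded Fourier-coefficient functionals $f\mapsto \hat f(0)$ and $f\mapsto \hat f(-j)$ for $j\ge 2$.

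The only remaining work is to verify $\mathscr{M}^\perp = \fA$ inside $L^\infty$, and both inclusions are done by matching Fourier coefficients. For $\fA\subset \mathscr{M}^\perp$, any $\psi\in \fA$ has Fourier support in $\{0, 2, 3, 4,\ldots\}$ (since it is in $H^\infty$ with vanishing derivative at $0$), while any $f\in \mathscr{M}$ has Fourier support in $\{-1, 1, 2, 3, \ldots\}$; by Parseval, $\intpp \psi\, f\, \dtn = \sum_j \hat\psi(j)\hat f(-j) = 0$ because the two supports never pair to give a nonzero contribution. For the reverse inclusion, the functions $e^{-it}$ and $e^{ijt}$ for $j\ge 1$ all lie in $\mathscr{M}$, so if $\psi\in \mathscr{M}^\perp$ then pairing against these forces $\hat\psi(1)=\hat\psi(-1)=0$ and $\hat\psi(-j)=0$ for $j\ge 2$; thus $\psi$ has Fourier series supported in $\{0, 2, 3, \ldots\}$, which is exactly the condition that $\psi\in \fA$.

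Combining the two steps, the quotient $L^\infty/\mathscr{M}^\perp$ coincides with $L^\infty/\fA$, and the isomorphism $\Lambda$ constructed in the lemma is precisely the restriction isomorphism from general duality, hence isometric. There is no real obstacle here beyond bookkeeping; the substantive content is the Fourier-support calculation identifying $\mathscr{M}^\perp$ with $\fA$, which is essentially already carried out in the paragraph preceding the lemma.
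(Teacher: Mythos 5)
Your proof is correct and follows essentially the same route as the paper: invoke the general duality isomorphism $X^*/Y^\perp\cong Y^*$ for a (closed) subspace $Y\subset X$, specialize to $X=L^1$, $Y=\mathscr{M}$ with $(L^1)^*\cong L^\infty$, and identify $\mathscr{M}^\perp=\fA$ by a Fourier-coefficient computation. The only slip is the word ``three'' where you mean the infinite family of functionals $f\mapsto\hat f(0)$ and $f\mapsto\hat f(-j)$, $j\ge 2$; the closedness of $\mathscr{M}$ (which the paper leaves implicit) is still correctly established by your observation that $\mathscr{M}$ is an intersection of kernels of bounded functionals.
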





\begin{lemma}
\label{lem:product}
 If $\phi\in L^\infty$ and $\psi\in \fA$, then
\[
\begin{split}
 T^\alpha_{\overline{\psi}\phi}=& T^\alpha_{\overline{\psi}}\, T^\alpha_\phi\\
 T^\alpha_{\psi\overline{\phi}} = & T^\alpha_{\overline{\phi}} \,  T^\alpha_\psi.
\end{split}
\]
\end{lemma}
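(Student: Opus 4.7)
The plan is to reduce both identities to the single observation that multiplication by $\psi\in\fA$ leaves the subspace $H^2_\alpha$ invariant, and then push everything through the definition $T^\alpha_\phi=V_\alpha^* M_\phi V_\alpha$.

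First I would verify the invariance $M_\psi H^2_\alpha\subset H^2_\alpha$ for $\psi\in\fA$. Given $f\in H^2_\alpha$, the product $\psi f$ is in $H^2$, and since $\psi^\prime(0)=0$ one computes $(\psi f)(0)=\psi(0)f(0)$ and $(\psi f)^\prime(0)=\psi(0)f^\prime(0)$, so the defining relation $(\psi f)(0)\,b=(\psi f)^\prime(0)\,a$ follows from $f(0)b=f^\prime(0)a$. This is the only step that uses the Neil-algebra constraint $\psi^\prime(0)=0$ and is really the heart of the lemma; the rest is bookkeeping.

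From the invariance, the operator $M_\psi V_\alpha:H^2_\alpha\to L^2$ actually takes values in $H^2_\alpha$, so $V_\alpha V_\alpha^* M_\psi V_\alpha = M_\psi V_\alpha$, i.e.
\[
 V_\alpha\, T^\alpha_\psi = M_\psi V_\alpha.
\]
Taking adjoints and using $M_\psi^*=M_{\overline{\psi}}$ together with $(T^\alpha_\psi)^*=T^\alpha_{\overline{\psi}}$ from Lemma \ref{lem:norm} gives the companion identity
\[
 T^\alpha_{\overline{\psi}}\, V_\alpha^* = V_\alpha^* M_{\overline{\psi}}.
\]

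With these two intertwinings in hand, the first identity is immediate:
\[
 T^\alpha_{\overline{\psi}}\, T^\alpha_\phi = T^\alpha_{\overline{\psi}} V_\alpha^* M_\phi V_\alpha = V_\alpha^* M_{\overline{\psi}} M_\phi V_\alpha = V_\alpha^* M_{\overline{\psi}\phi} V_\alpha = T^\alpha_{\overline{\psi}\phi}.
\]
For the second identity I would simply take adjoints of the first, applied with $\phi$ replaced by $\overline{\phi}$: using Lemma \ref{lem:norm} again,
\[
 T^\alpha_{\psi\overline{\phi}} = \bigl(T^\alpha_{\overline{\psi}\phi}\bigr)^* = \bigl(T^\alpha_{\overline{\psi}} T^\alpha_\phi\bigr)^* = (T^\alpha_\phi)^*(T^\alpha_{\overline{\psi}})^* = T^\alpha_{\overline{\phi}}\, T^\alpha_\psi.
\]
No genuine obstacle arises; the only conceptual point is the invariance of $H^2_\alpha$ under $M_\psi$, which is precisely where the derivative-vanishing condition defining $\fA$ is used.
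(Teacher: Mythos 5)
Your proof is correct and follows essentially the same route as the paper: both hinge on the invariance of $H^2_\alpha$ under multiplication by $\psi\in\fA$ (the paper invokes ``$\psi g\in H^2_\alpha$'' in the middle of an inner-product chain, while you package the same fact as the intertwining $V_\alpha T^\alpha_\psi = M_\psi V_\alpha$), and both deduce the second identity from the first by taking adjoints via Lemma~\ref{lem:norm}. Your version has the small virtue of explicitly verifying the invariance of $H^2_\alpha$ under $M_\psi$, which the paper asserts without proof.
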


\begin{proof}
Let $f, g \in H^2_\alpha$ be given. Since $\psi g \in H^2_{\alpha}$, it follows, using Lemma \ref{lem:norm},  that $\langle T^\alpha_{\overline{\psi}}\, T^\alpha_\phi f, g \rangle =
\langle T^\alpha_\phi f, (T^\alpha_{\overline{\psi}})^*g \rangle = \langle T^\alpha_\phi f, T^\alpha_{{\psi}}g \rangle
= \langle V_{\alpha}^* \phi f, \psi g \rangle = \langle \phi f, V_{\alpha} \psi g \rangle =
\langle \phi f, \psi g \rangle = \langle \overline{\psi} \phi f, g \rangle = \langle \overline{\psi} \phi V_{\alpha}f, V_{\alpha} g \rangle = \langle T_{\overline{\psi}\phi}^{\alpha} f, g \rangle$. Thus $T^\alpha_{\overline{\psi}\phi}= T^\alpha_{\overline{\psi}}\, T^\alpha_\phi$. Applying Lemma \ref{lem:norm} to what has already been proved, $T^\alpha_{\psi\overline{\phi}} = (T^\alpha_{\overline{\psi}\phi})^* = (T^\alpha_{\overline{\psi}}\, T^\alpha_\phi)^* = T^\alpha_{\overline{\phi}} T^\alpha_{\psi}$.
\end{proof}

An element $\psi\in \fA$ is \df{invertible in $\fA$} if it does not  vanish in $\DD$ and $\psi^{-1}=\frac{1}{\psi}\in \fA$.

\begin{lemma}
\label{lem:inverse}
Suppose $\psi\in \fA$. The following are equivalent.
\begin{enumerate}[(i)]
 \item \label{it:invertible} $\psi$ is invertible in $\fA$;
 \item \label{it:somea} there is an $\alpha\in \PP$ such that $T^\alpha_\psi$ is right invertible;
 \item \label{it:alla}$T^\alpha_\psi$ is invertible for each $\alpha \in \PP$.
\end{enumerate}
Moreover, in this case  $(T^\alpha_\psi)^{-1} = T^\alpha_{\psi^{-1}}$.
\end{lemma}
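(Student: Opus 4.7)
The plan is to prove the cycle of implications (i) $\Rightarrow$ (iii) $\Rightarrow$ (ii) $\Rightarrow$ (i), with the formula $(T^\alpha_\psi)^{-1} = T^\alpha_{\psi^{-1}}$ falling out of the first step. The first two implications will be quick; the last is where the real content lies.

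For (i) $\Rightarrow$ (iii), the plan is to invoke Lemma \ref{lem:product} directly. Since both $\psi$ and $\psi^{-1}$ lie in $\fA$, substituting $\overline{\phi} = \psi^{-1}$ into the second identity of that lemma yields $T^\alpha_{\psi^{-1}} T^\alpha_\psi = T^\alpha_1 = I$, and applying the same identity with the roles of $\psi$ and $\psi^{-1}$ swapped yields $T^\alpha_\psi T^\alpha_{\psi^{-1}} = I$. This simultaneously gives invertibility on every $H^2_\alpha$ and identifies the inverse, establishing both (iii) and the final assertion. The implication (iii) $\Rightarrow$ (ii) is immediate.

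The hard part will be (ii) $\Rightarrow$ (i). If $T^\alpha_\psi$ has a right inverse, then $(T^\alpha_\psi)^*$ is bounded below, say by some $c > 0$. I would then test this lower bound against the reproducing kernels: by Lemma \ref{lem:ker0}, the vectors $k^\alpha_w$ are eigenvectors of $(T^\alpha_\psi)^*$ with eigenvalue $\overline{\psi(w)}$, so $|\psi(w)|\,\|k^\alpha_w\| \ge c\,\|k^\alpha_w\|$ for every $w \in \DD$. The same lemma tells us that $k^\alpha_w$ can vanish only for $w=0$ in the single exceptional case $\alpha = (0,1)$, so $|\psi(w)| \ge c$ holds off that one point, and continuity of $\psi$ on $\DD$ fills in the gap. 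Hence $1/\psi \in H^\infty$, and since $(1/\psi)^\prime(0) = -\psi^\prime(0)/\psi(0)^2 = 0$ because $\psi \in \fA$, we conclude that $\psi^{-1} \in \fA$ and $\psi$ is invertible in $\fA$.

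The main obstacle is this last implication: one must convert the operator-theoretic hypothesis that $T^\alpha_\psi$ has a right inverse into the pointwise conclusion that $\psi$ is bounded away from zero on $\DD$, and the bridge is precisely the eigenvalue formula from Lemma \ref{lem:ker0}. The only subtlety is the possible single vanishing of $k^\alpha_w$, which is handled by a continuity argument.
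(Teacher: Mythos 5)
Your proposal is correct and follows essentially the same route as the paper: the hard direction (ii)~$\Rightarrow$~(i) is handled exactly as in the paper, by bounding $(T^\alpha_\psi)^*$ below, testing on the reproducing kernels $k^\alpha_w$ via the eigenvalue formula from Lemma~\ref{lem:ker0}, and filling in the single exceptional point by continuity. Your use of Lemma~\ref{lem:product} to get (i)~$\Rightarrow$~(iii) and the formula $(T^\alpha_\psi)^{-1}=T^\alpha_{\psi^{-1}}$ simply makes explicit what the paper dismisses as ``evident.''
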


\begin{proof}
 Evidently item \eqref{it:invertible} implies item \eqref{it:alla} implies item \eqref{it:somea}.
 Now suppose there is an $\alpha$ such that $T:=T^\alpha_\psi$ is right invertible.
 The Hilbert space $H^2_\alpha$ has a reproducing kernel $k^\alpha_w(z)$ and further
$T^* k^\alpha_w = \overline{\psi(w)}k^\alpha_w$ by Lemma \ref{lem:ker0}. Since $T$ is right invertible,  $T^*$
is bounded below; i.e., there is a $\delta>0$  such that $\|T^*f\|\ge \delta\|f\|$ for all $f\in H^2_\alpha$. Hence,
\[
 |\overline{\psi(w)}| \, \|k^\alpha_w\|
    =\|T^* k^\alpha_w\|\ge \delta \|k^\alpha_w\|.
\]
 Moreover, by Lemma \ref{lem:ker0} $k^\alpha_w\ne 0$ for $w\ne 0.$  Thus $|\frac{1}{\psi}(w)|\le \frac{1}{\delta}$ for
$w\in \DD\setminus \{0\}$ and therefore, as $\frac{1}{\psi}$ is otherwise analytic,  $|\frac{1}{\psi}|$ is bounded by $\frac{1}{\delta}$.
Since $\psi\in \fA$ it follows that $\frac{1}{\psi}\in \fA$ too; i.e., item \eqref{it:invertible} holds.
\end{proof}

\begin{lemma}
\label{lem:leftinvertible}
 Suppose $\phi\in L^\infty$ is unimodular. If there exists $\psi\in \fA$ such that $\|\phi-\psi\|<1$, then $T^\alpha_{\overline{\phi}}T^\alpha_\psi$  is invertible,
and therefore $T^\alpha_\phi$ is left invertible, for each $\alpha\in \PP$.
 Further, if $\psi$ is invertible in $\fA$, then $T^\alpha_{\overline{\psi}}\,T^\alpha_\phi$ is invertible, and therefore $T^\alpha_{\phi}$ is invertible, for each $\alpha\in \PP$.
\end{lemma}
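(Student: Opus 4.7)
The plan is to combine Lemma~\ref{lem:product}, the unimodularity of $\phi$, and a Neumann series. Since $\psi\in\fA$, the second identity of Lemma~\ref{lem:product} gives
\[
 T^\alpha_{\overline{\phi}}\, T^\alpha_\psi \;=\; T^\alpha_{\psi\overline{\phi}}.
\]
I would then exploit $\phi\overline{\phi}=1$ a.e.\ to write
\[
 \psi\overline{\phi} \;=\; 1 + (\psi-\phi)\overline{\phi},
\]
so that $\|(\psi-\phi)\overline{\phi}\|_\infty \le \|\psi-\phi\|_\infty < 1$. Lemma~\ref{lem:norm} then gives $\|T^\alpha_{(\psi-\phi)\overline{\phi}}\|<1$, so $T^\alpha_{\psi\overline{\phi}}$ is the identity plus an operator of norm strictly less than one on $H^2_\alpha$ and is invertible via the Neumann series. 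This delivers invertibility of $T^\alpha_{\overline{\phi}}T^\alpha_\psi$.

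From here I would appeal to the adjoint identity $T^\alpha_{\overline{\phi}}=(T^\alpha_\phi)^*$ in Lemma~\ref{lem:norm}. Invertibility of the product $T^\alpha_{\overline{\phi}}T^\alpha_\psi$ forces $T^\alpha_{\overline{\phi}}=(T^\alpha_\phi)^*$ to be right invertible, equivalently $T^\alpha_\phi$ to be left invertible. This completes the first assertion.

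For the second assertion, assume additionally that $\psi$ is invertible in $\fA$. Running the parallel argument with the roles swapped, the first identity of Lemma~\ref{lem:product} together with
\[
 \overline{\psi}\phi \;=\; 1 + \overline{(\psi-\phi)}\,\phi
\]
shows $T^\alpha_{\overline{\psi}}T^\alpha_\phi = T^\alpha_{\overline{\psi}\phi}$ is likewise $I$ plus an operator of norm less than one, hence invertible. (Equivalently, it is the adjoint of the invertible operator produced in the first step.) Meanwhile Lemma~\ref{lem:inverse} guarantees that $T^\alpha_\psi$, and therefore $T^\alpha_{\overline{\psi}}=(T^\alpha_\psi)^*$, is invertible. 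Factoring
\[
 T^\alpha_\phi \;=\; \bigl(T^\alpha_{\overline{\psi}}\bigr)^{-1}\, T^\alpha_{\overline{\psi}}T^\alpha_\phi
\]
then yields invertibility of $T^\alpha_\phi$ itself.

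There is no serious obstacle to this plan; the argument is a short Neumann series computation once Lemma~\ref{lem:product} has been invoked. The only bit of care required is bookkeeping: Lemma~\ref{lem:product} demands that one of the two symbols being multiplied lie in $\fA$, so in each application the factor $\psi$ (respectively $\overline{\psi}$) must occupy the $\fA$-slot while the unimodular symbol $\overline{\phi}$ (respectively $\phi$) sits opposite it.
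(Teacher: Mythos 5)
Your argument is correct and follows essentially the same route as the paper: both rewrite $\psi\overline{\phi}$ as a perturbation of the identity using unimodularity, invoke Lemma~\ref{lem:norm} to transfer the estimate to $T^\alpha_{\psi\overline{\phi}}$, factor via Lemma~\ref{lem:product}, and pass to adjoints to get (one-sided or two-sided) invertibility of $T^\alpha_\phi$. The only cosmetic difference is that the paper deduces invertibility of $T^\alpha_{\overline{\phi}}$ in the second part directly from the first part together with Lemma~\ref{lem:inverse}, whereas you rerun the Neumann-series argument on $T^\alpha_{\overline{\psi}\phi}$ (or, as you note, observe it is the adjoint of the operator from the first part) before invoking Lemma~\ref{lem:inverse}.
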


\begin{proof}
Suppose there exists $\psi\in \fA$ such that $\|\phi-\psi\|<1$. In this case $\|1-\psi \overline{\phi}\|<1$, since $|\phi|=1$ (unimodular).
Hence, by Lemma \ref{lem:norm}, for a given $\alpha\in \PP$,
\[
 1 > \|1-\psi \overline{\phi}\| = \|T^\alpha_{1-\psi\overline{\phi}}\|=\|1-T^\alpha_{\psi \overline{\phi}}\|.
\]
In particular, $T^\alpha_{\psi \overline{\phi}}$ is invertible.
Since $\psi\in \fA$, Lemma \ref{lem:product} applies to give, $T^\alpha_{\psi\overline{\phi}}=T^\alpha_{\overline{\phi}}T^\alpha_\psi$. Thus  $T^\alpha_{\overline{\phi}}$
is right invertible. By Lemma \ref{lem:norm},  $(T^\alpha_{\overline{\phi}})^* = T^\alpha_\phi$  is left invertible.

 Now, assuming $\psi$ is invertible in $\fA$,
by Lemma \ref{lem:inverse}, $T^\alpha_\psi$ is invertible.  The invertibility of  $T^\alpha_{\overline{\phi}}$ follows.
Thus, again using Lemma \ref{lem:norm}, $T^\alpha_\phi$ is invertible.
\end{proof}

\begin{lemma}
\label{lem:boundedbelow}
 If $\phi\in L^\infty$ and  $T^\alpha_\phi$ is left invertible for each $\alpha\in\PP$, then there exists an $\epsilon \in (0, 1]$, such that
for each $\alpha\in \PP$ and $f\in H^2_\alpha$,
\[
\|T^\alpha_\phi f\|\ge \epsilon \|f\|.
\]
\end{lemma}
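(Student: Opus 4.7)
The plan is a compactness argument on $\BB$, using the continuity established in Lemma \ref{lem:Pcont} to propagate the per-$\alpha$ lower bounds into a uniform one. For each fixed $\alpha\in\PP$ the hypothesis that $T^\alpha_\phi$ is left invertible yields some $\epsilon_\alpha>0$ with $\|T^\alpha_\phi f\|\ge\epsilon_\alpha\|f\|$ for $f\in H^2_\alpha$; the task is to show these constants do not degenerate as $\alpha$ varies. I would argue by contradiction: assume there exist $\alpha_n\in\BB$ and $f_n\in H^2_{\alpha_n}$ with $\|f_n\|=1$ and $\|T^{\alpha_n}_\phi f_n\|\to 0$.

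Since $\BB\subset\CC^2$ is compact, pass to a subsequence (not relabeled) with $\alpha_n\to\alpha_*\in\BB$. By Lemma \ref{lem:Pcont}, $\|P_{\alpha_n}-P_{\alpha_*}\|\to 0$. The key observation is that for $f\in H^2_\beta$ one has $T^\beta_\phi f = P_\beta M_\phi f$ in $L^2$, so I can transfer the problem into the ambient $L^2$ and compare different subspaces via their projections. Set $g_n := P_{\alpha_*} f_n\in H^2_{\alpha_*}$. Since $P_{\alpha_n} f_n = f_n$, we have
\[
g_n - f_n = (P_{\alpha_*}-P_{\alpha_n}) f_n,
\]
so $\|g_n-f_n\|\to 0$ and consequently $\|g_n\|\to 1$.

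Next I would estimate, using Lemma \ref{lem:norm} ($\|M_\phi\|=\|\phi\|$),
\[
\|T^{\alpha_*}_\phi g_n\| = \|P_{\alpha_*} M_\phi g_n\|
\le \|P_{\alpha_*}\|\,\|\phi\|\,\|g_n-f_n\|
+ \|(P_{\alpha_*}-P_{\alpha_n}) M_\phi f_n\|
+ \|P_{\alpha_n} M_\phi f_n\|.
\]
The first term tends to $0$ because $\|g_n-f_n\|\to 0$; the second because $\|P_{\alpha_*}-P_{\alpha_n}\|\to 0$ while $\|M_\phi f_n\|$ stays bounded by $\|\phi\|$; and the third is exactly $\|T^{\alpha_n}_\phi f_n\|\to 0$. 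Hence $\|T^{\alpha_*}_\phi g_n\|\to 0$ while $\|g_n\|\to 1$, contradicting the fact that $T^{\alpha_*}_\phi$ is bounded below. Replacing the resulting $\epsilon$ with $\min(\epsilon,1)$ yields the desired $\epsilon\in(0,1]$.

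The main obstacle is the one just sidestepped: the operators $T^\alpha_\phi$ act on different Hilbert spaces $H^2_\alpha$, so a naive continuity-in-$\alpha$ argument for $T^\alpha_\phi$ is not available. The trick is to realize each $T^\alpha_\phi$ as a compression of the single operator $M_\phi$ on the fixed space $L^2$ via the varying projections $P_\alpha$; once this is in place, the uniform convergence of $P_{\alpha_n}$ in Lemma \ref{lem:Pcont} does the comparing.
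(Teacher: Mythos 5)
Your proof is correct and rests on the same two pillars as the paper's: compactness of $\BB$ and the norm continuity of $\alpha\mapsto P_\alpha$ from Lemma~\ref{lem:Pcont}. The technical device you use to compare operators acting on different subspaces, however, is not the one in the paper. The paper introduces the auxiliary operator $X_\alpha = P_\alpha M_\phi P_\alpha + (I-P_\alpha)$ on the fixed space $L^2$, observes that $X_\alpha$ is bounded below exactly when $T^\alpha_\phi$ is, and notes that $\alpha\mapsto X_\alpha$ is norm continuous by Lemma~\ref{lem:Pcont}; the contradiction with $\|X_\beta F_n\|\ge\epsilon_\beta$ then drops out in one line. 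You instead keep $T^\alpha_\phi$ on $H^2_\alpha$ and transport the offending unit vectors $f_n\in H^2_{\alpha_n}$ into $H^2_{\alpha_*}$ via $g_n=P_{\alpha_*}f_n$, then estimate $\|T^{\alpha_*}_\phi g_n\|$ by a three-term split, each of which you correctly send to zero (using $\|g_n-f_n\|\to 0$, $\|P_{\alpha_*}-P_{\alpha_n}\|\to 0$, and $\|P_{\alpha_n}M_\phi f_n\|=\|T^{\alpha_n}_\phi f_n\|\to 0$). Both routes are valid. The $X_\alpha$ trick is a touch slicker because it replaces the varying family of spaces by a single one from the outset, so that ``continuity of the operator family'' is literal; your version dispenses with the auxiliary operator at the cost of tracking the vector transport by hand, which some may find more transparent about which limit does what.
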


\begin{proof}
 For $\alpha \in \BB$, define $X_\alpha:L^2\to L^2$ by $X_\alpha = P_\alpha M_\phi P_\alpha + (I-P_\alpha)$.
 Given $\alpha\in\BB$,  since $T^\alpha_\phi$ is left invertible, there exists an $\epsilon_\alpha \in (0, 1]$ such that
 $\|{V_{\alpha}}T^\alpha_\phi f\|=\|T^\alpha_\phi f\|\ge \epsilon_\alpha \|f\|$ for $f\in H^2_\alpha$. Hence, given $F=f+g$ with $f\in H^2_\alpha$ and $g\in (H^2_\alpha)^\perp$,
\[
 \|X_\alpha F\|^2 = \|{V_{\alpha}}T^\alpha_\phi f\|^2 +\|g\|^2 \ge \epsilon_\alpha^2 \|F\|^2.
\]
Thus,  $\|X_\alpha F\|\ge \epsilon_\alpha \|F\|$ for all $F\in L^2$.

To show there is an $\epsilon>0$ such that $\|X_\alpha F\|\ge \epsilon \|F\|$ for all $\alpha \in \PP$ and $F\in L^2$, we argue by contradiction.
Accordingly suppose no such $\epsilon>0$ exists.
 By compactness of $\BB$,  there is a sequence $\alpha_n=(a_n,b_n)$ from $\BB$,  that, by passing to a subsequence if needed,
we may assume converges to some $\beta=(a,b)\in \BB$ and a unit vectors $F_n \in L^2$ such that $(\|X_{\alpha_n}F_n\|)_n$ converges to $0$.
But then,
\[
0< \epsilon_\beta \le  \|X_\beta F_n\|\le \|X_{\alpha_n}F_n\| + \|(X_{\beta}-X_{\alpha_n})F_n\|.
\]
By norm continuity (Lemma \ref{lem:Pcont}) the last term on the right hand side tends to $0$ and by assumption the first term on the right hand side
tends to $0$, a contradiction.

To complete the proof, simply observe if $f\in H^2_\alpha\subset L^2$, then $\|\phi\| \|f\| \ge \|T^\alpha_\phi f\|=\|X_\alpha f\|\ge \epsilon \|f\|$.
\end{proof}

\begin{lemma}
\label{lem:left}
 Suppose $\phi\in L^\infty$ is unimodular. The distance from $\phi$ to  $\fA$ is strictly less than one if and only
if $T^\alpha_\phi$ is left invertible for each $\alpha\in \PP$.
\end{lemma}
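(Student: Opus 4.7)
The forward implication---distance strictly less than one implies left invertibility of every $T^\alpha_\phi$---is exactly Lemma \ref{lem:leftinvertible}, so my plan is to prove the converse. The strategy is to identify $\mathrm{dist}(\phi,\fA)$ with the norm of a functional on $\mathscr{M}$ via Lemma \ref{lem:dual}, and then to estimate that functional on each $h\in\mathscr{M}$ by factoring $h$ through the Neil-adapted decomposition of Lemma \ref{lem:factor}, where the orthogonality of the second factor to $H^2_\alpha$ is engineered precisely so that $T^\alpha_\phi$ (rather than $M_\phi$) controls the pairing.

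Concretely, I would first invoke Lemma \ref{lem:boundedbelow} to obtain a single $\epsilon\in(0,1]$ with $\|T^\alpha_\phi f\|\ge \epsilon\|f\|$ uniformly in $\alpha\in\PP$ and $f\in H^2_\alpha$. Given $h\in\mathscr{M}$, Lemma \ref{lem:factor} supplies $\alpha\in\PP$, $f\in H^2_\alpha$, and $g\in L^2$ with $\overline g\in(H^2_\alpha)^\perp$, $h=fg$, and $\|f\|_2\|g\|_2=\|h\|_1$. I would then rewrite
\[
\intpp \phi\,h\,\dtn \;=\; \langle M_\phi V_\alpha f,\;\overline g\rangle_{L^2}
\]
and orthogonally split $M_\phi V_\alpha f = V_\alpha T^\alpha_\phi f + (I-P_\alpha)M_\phi V_\alpha f$. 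The first summand lies in $H^2_\alpha$ and is annihilated by the pairing against $\overline g$, so Cauchy--Schwarz together with Pythagoras and $|\phi|=1$ gives
\[
\left|\intpp \phi\,h\,\dtn\right| \;\le\; \sqrt{\|f\|_2^2-\|T^\alpha_\phi f\|_2^2}\cdot\|g\|_2 \;\le\; \sqrt{1-\epsilon^2}\,\|h\|_1.
\]
Taking the supremum over $h\in\mathscr{M}$ with $\|h\|_1\le 1$ and invoking Lemma \ref{lem:dual} then yields $\mathrm{dist}(\phi,\fA)\le\sqrt{1-\epsilon^2}<1$.

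The main conceptual hurdle, as I see it, is reconciling the per-$\alpha$ hypothesis with a single scalar bound. The classical Widom argument for $H^\infty$ factors $h\in H^1$ as a product in $H^2\cdot H^2$, which corresponds to a single choice of ``$\alpha$''. Here the factorization must be $\alpha$-adapted, and it is Lemma \ref{lem:factor} that selects the right $\alpha$ for each $h$; matching that with the hypothesis across all $\alpha$ simultaneously then forces the use of the uniform lower bound in Lemma \ref{lem:boundedbelow}. Once those two ingredients are assembled, the remainder is routine orthogonal-decomposition bookkeeping in $L^2=H^2_\alpha\oplus(H^2_\alpha)^\perp$.
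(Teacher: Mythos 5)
Your proposal is correct and follows essentially the same route as the paper: the forward direction is deferred to Lemma \ref{lem:leftinvertible}, and the converse combines the uniform lower bound of Lemma \ref{lem:boundedbelow}, the $\alpha$-adapted factorization of Lemma \ref{lem:factor}, the orthogonal splitting $\phi f = P_\alpha\phi f + (I-P_\alpha)\phi f$ with the Pythagorean identity $\|f\|^2=\|T^\alpha_\phi f\|^2+\|(I-P_\alpha)\phi f\|^2$, and the duality of Lemma \ref{lem:dual}. The only cosmetic difference is that you make the inclusions $V_\alpha$ explicit in the decomposition; the estimates are identical.
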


\begin{proof}
Suppose $T^\alpha_\phi$ is left invertible for each $\alpha\in\PP$. In this case, Lemma \ref{lem:boundedbelow} applies and thus
there is an $1 \ge \epsilon>0$ such that for each $\alpha$ and $f\in H^2_\alpha$,
\[
 \|T^\alpha_\phi f\|\ge \epsilon \|f\|.
\]

Now let $h\in\mathscr{M}$ be given. By Lemma \ref{lem:factor} there is an $\alpha\in \PP$ and $f\in H^2_\alpha$ and a $g\in L^2$ such that $\overline{g}\in (H^2_\alpha)^\perp$
and both $h=fg$ and $\|h\|_1 = \|f\|_2\,\|g\|_2$.  Thus,
\[
\begin{split}
\left | \intpp \phi h \,\dtn \right |
  = & \left | \intpp \phi fg \, \dtn \right |\\
 = &\left | \langle \phi f,\overline{g}\rangle \right | \\
 = & \left | \langle \phi f,(I-P_\alpha)\overline{g}\rangle \right | \\
 = & \left | \langle (I-P_\alpha)\phi f,\overline{g} \rangle \right |\\
\le & \|(I-P_\alpha)\phi f\| \, \|g\|.
\end{split}
\]
On the other hand, using the unimodular hypothesis,
\[
\begin{split}
 \|f\|^2= & \,  \|\phi \, f\|^2= \|P_\alpha \phi f\|^2 +\|(I-P_\alpha)\phi\, f\|^2 \\
 = & \, \|T^\phi_\alpha f\|^2 + \|(I-P_\alpha)\phi \, f\|^2 \\
 \ge & \, \epsilon^2 \|f\|^2  + \|(I-P_\alpha)\phi \, f\|^2.
\end{split}
\]
Thus, $(1-\epsilon^2) \|f\|^2 \ge \|(I-P_\alpha)\phi\, f\|^2.$
Therefore,
\[
 \left | \intpp \phi h \,\dtn \right | \le \sqrt{1-\epsilon^2} \, \|f\|_2\,\|g\|_2 =
  \sqrt{1-\epsilon^2}\, \|h\|_1.
\]
By Lemma \ref{lem:dual}, it now follows that $\|\pi(\phi)\|<1$, where $\pi:L^\infty \to L^\infty/\fA$ is the quotient map; i.e.,
the distance from $\phi$ to $\fA$ is less than one.

{Conversely, if the distance from $\phi$ to $\fA$ is less than one, then there exists a $\psi \in \fA$ such that $\|\phi - \psi\| < 1$. It follows from Lemma  \ref{lem:leftinvertible} that $T^\alpha_\phi$ is left invertible.}
\end{proof}

\begin{proof}[Proof of Theorem \ref{thm:widomfA}]
All that remains to be shown is:  $T^\alpha_\phi$ is invertible for each $\alpha\in\PP$ if and only if the distance from $\phi$ to the invertible
elements of $\fA$ is at most one. If $T^\alpha_\phi$ is invertible for each $\alpha \in \PP$, then there exists a $\psi\in \fA$ such
that $\|\phi-\psi\|<1$ by Lemma \ref{lem:left}.  By Lemma \ref{lem:leftinvertible},  $T^\alpha_{\overline{\phi}} \, T^\alpha_{\psi}$ is invertible.
By Lemma \ref{lem:norm}, $T^\alpha_{\overline{\phi}}$ is invertible  and thus $T^\alpha_\psi$ is invertible.
B Lemma \ref{lem:inverse} $\psi$ is invertible in $\fA$.

The converse is contained in Lemma \ref{lem:leftinvertible}.
\end{proof}
%
%



\printindex
\end{document}